\renewcommand{\paragraph}{\roman{paragraph}}
\tikzstyle arrowstyle=[scale=1]
\tikzstyle directed=[postaction={decorate,decoration={markings, mark=at position .65 with {\arrow[arrowstyle]{stealth}}}}]
\tikzstyle reverse directed=[postaction={decorate,decoration={markings, mark=at position .65 with {\arrowreversed[arrowstyle]{stealth};}}}]
\newtheorem{theorem}{Theorem}
\newtheorem{conjecture}[theorem]{Conjecture}
\newtheorem{problem}[theorem]{Problem}
\newtheorem{lemma}[theorem]{Lemma}
\newtheorem{proposition}[theorem]{Proposition}
\newenvironment{proof}{\noindent {\bf Proof.}}{\rule{3mm}{3mm}\par\medskip}
\begin{document}

\title{\bf The isomorphism problem of trees from the viewpoint of Terwilliger algebras \thanks{Supported by Natural Science Research Foundation of Anhui Provincial Department of Educationg(KJ2016A044), National Natural Science Foundation of China(11871071, 11771016, 11871073),
 Open Project of Anhui University (KF2019B03), Doctoral Start-up Fund of Anhui Jianzhu University(2018QD22).
  }}

\author{Shuang-Dong Li$^{1,2}$,
 Yi-Zheng Fan$^1$,
Tatsuro Ito $^1$\thanks{Tatasuro Ito: Corresponding author.},\\
Masoud Karimi$^3$,
Jing Xu $^1$\\
{\footnotesize \it $^1$ School of Mathematical Sciences, Anhui University, Hefei 230601, PR China}\\
{\footnotesize \it $^2$ Jianghuai College, Anhui University, Hefei 230031, PR China}\\
{\footnotesize \it $^3$ Department of Mathematics,Bojnourd Branch,Islamic Azad University,Bojnourd,Iran}
{\footnote{E-mail address: lisd@ahu.edu.cn (S.D.,Li); fanyz@ahu.edu.cn (Y.Z.,Fan); tito@staff.kanazawa-u.ac.jp (T.Ito);
karimimth@yahoo.com (M.Karimi); xujing@ahu.edu.cn (J.Xu).}}
}

\date{}
\maketitle

\begin{abstract}
Let $\Gamma^{(x_0)}$ be a finite rooted tree, for which $\Gamma$ is the underlying tree and $x_0$ the root.
 Let $T$ be the Terwilliger algebra of $\Gamma$ with respect to $x_0$.
 We study the structure of the principal $T$-module. As a result, it is shown that $T$ recognizes the isomorphism class of $\Gamma^{(x_0)}$.

\medskip
\noindent
{\bf Keywords:} Tree, Terwilliger algebra, Principal module, Isomorphism problem\\
{\bf 2010 Mathematics subject classification:} 05E10, 05E30, 20C08
\end{abstract}

\section{Introduction}
The Terwilliger algebras, which are oringinally called subconstituent algebras, are introduced in \cite{pt1} for association schemes, and their
 representations are deeply studied for ($P$ and $Q$)-polynomial association schemes in \cite{pt2, pt3}.
 In \cite{pt4}, the Terwilliger algebras are defined for graphs, and their representations are deeply studied for distance-regular graphs.
 This paper treats the Terwilliger algebras of trees and determines the structure of their principal modules. As a result, it is shown that the isomorphism classes of rooted trees are recognized by their Terwilliger algebras.

 Since there is no literature on Terwilliger algebras of graphs in general except for \cite{pt4} which is mainly aimed at distance-regular graphs with the $Q$-polynomial property, we begin with sorting out basic concepts about Terwilliger algebras, before going into details of our motivations (Conjecture 2, Problem 3) and main results (Theorem 4, Theorem 5).
 Notations and terminologies throughout the paper will be fixed in the course of doing so in this section.

\subsection{Terwilliger algebras in general as algebras}
 Let $V$ be a finite-dimensional vector space over the complex number field $\mathbb{C}$, equipped with a non-degenerate Hermitian form.
 We are given a decomposition of $V$ into the direct sum of mutually orthogonal subspaces $V_i^*,~0\leq i\leq D$,
 i.e., $V=\bigoplus \limits_{i=0}^{D}V^*_i$, $V^*_i\perp V^*_j$ ($i\neq j$).
 Let $E^*_i$ be the orthogonal projection from $V$ onto $V^*_i$: $I=E^*_0+E^*_1+\cdots +E^*_D$, $E^*_iE^*_j=\delta_{ij}E^*_i$, where $I$ is
 the identity map and $\delta_{ii}=1$, $\delta_{ij}=0~(i\neq j)$.
 We are also given a normal transformation $A$ of $V$, i.e., $V$ has an orthonormal basis
 consisting of eigenvectors of $A$.
 Let $T$ be the subalgebra of the endomorphism algebra $\mbox{End}(V)$ generated by $A, E^*_i,~0\leq i\leq D$:
 \begin{align}
 &T=\langle A,E_i^*~|~ 0\leq i\leq D\rangle \subseteq \mbox{End}(V).
 \end{align}
 The algebra $T$ is called a {\it Terwilliger algebra}, or simply a {\it $T$-algebra}.

 Here we remark that we use the notation of $E^*_i$, following \cite{pt1, pt4}, in which $E_i$ stands for the orthogonal projection onto the
 eigenspace $V_i$ of $A$ and $E^*_i$ does not mean the adjoint of $E_i$.
 In this paper, $V_i$, $E_i$ do not appear anywhere.

 Note that $T$ is a semi-simple algebra, since $T$ is generated by normal transformations of $V$.
 $V$ is called the {\it standard module}.
 Note also that $T$ acts on $V$ faithfully.
 So the standard module $V$ is a sum of irreducible $T$-submodules and every irreducible $T$-module appears in $V$ up to
 isomorphism.

 Let $W$ be a $T$-submodule of $V$.
 Then the restriction of $T$ to $W$ can be thought of a Terwilliger algebra for which $W$ is the standard module:
\begin{align}
&T|_W=\langle A|_W,{E_i^*}|_W~|~ 0\leq i\leq D\rangle \subseteq \mbox{End}(W),
\end{align}
 where some $E^*_i$ may vanish on $W$, i.e. ${E_i^*}|_W=0$ for some $i$'s.

 Let $T^{\prime}$ be another Terwilliger algebra generated by a normal transformation $A^{\prime}$, and orthogonal projections
 ${E^*_i}^{\prime}$, $0\leq i\leq D^{\prime}$, for which $V^{\prime}$ is the standard module.
 The Terwilliger algebra $T$ given in (1) is said to be isomorphic to $T^{\prime}$ if $D=D^{\prime}$ and
 the correspondence of $A$ to $A^{\prime}$, $E^*_i$ to ${E^*_i}^{\prime}$, $0\leq i\leq D$ gives
 an algebra isomorphism between $T$ and $T^{\prime}$.
 In this case, we denote $T\simeq T^{\prime}$ and such an algebra isomorphism is called a {\it $T$-algebra isomorphism}.
 In the definition of a $T$-algebra isomorphism,
 the orderings of the orthogonal projections $E^*_i, {E^*_i}^{\prime}$, $0\leq i\leq D$ matter, but usually as we shall see later for the Terwilliger algebra of a graph, there exists a natural ordering of them, which is implicitly assumed unless particularly stated.

The standard modules $V$, $V^{\prime}$ for the Terwilliger algebras $T$, $T^{\prime}$ are said to be {\it isomorphic} if $T\simeq T^{\prime}$ and
there exists an isometry $\varphi: V\rightarrow V^{\prime}$ between the Hermitian spaces $V$, $V^{\prime}$ such that $\varphi(av)=a^{\prime}\varphi(v)$ for all $a\in T, v\in V$,
where $a^{\prime}=f(a)$ with $f$ the $T$-algebra isomorphism from $T$ to $T^{\prime}$, i.e., the diagram
\begin{align}
&\xymatrix{
 V\ar[d]_a \ar[rr]^-\varphi && V^{\prime}\ar[d]^{a^{\prime}} \\
V\ar[rr]_\varphi && V^{\prime}}
 \end{align}
commutes for all $a\in T$. In this case, we denote $V\simeq V^{\prime}$.

\subsection{Terwilliger algebras for finite connected simple graphs}
In this subsection, we define the Terwilliger algebra for a finite connected simple graph and
pose some questions about it that have motivated our present study of the Terwilliger algebra of a tree.

Let $\Gamma$ be a finite connected simple graph, i.e., the graph $\Gamma$ is finite, undirected, connected and has no loops, no multiple edges.
Let $X$ be the vertex set of $\Gamma$ and $x_0$  a fixed vertex from $X$.
We call $x_0$ the {\it base vertex}.
The vertex set $X$ is partitioned according to the distance from the base vertex $x_0$: $X=\bigcup \limits_{i=0}^D X_i$,
\begin{align}
& X_i=\{x\in X~|~\partial(x_0,x)=i\}.
\end{align}
where $\partial(x_0,x)$ is the length of a shortest path joining $x_0$ and $x$, and $D=$ $\mbox{max}\{\partial(x_0,x)~|\\~x\in X\}$.

Set $V=\mathbb{C}X$, i.e., $V$ is the vector space over $\mathbb{C}$ formally spanned by $X$ as a basis.
Regarding $X$ as an orthonormal basis, $V$ is equipped with a non-degenerate Hermitian form.
The partition $X=\bigcup \limits_{i=0}^DX_i$ gives rise to an orthogonal decomposition
of $V$:~$V=\bigcup \limits_{i=0}^DV_i^*$, where $V_i^*=\mathbb{C}X_i$ is the subspace of $V$ spanned by $X_i$.

Let $A$ be the adjacency matrix of $\Gamma$, i.e., the $(x,y)$-entry of $A$ is $1$ if $(x,y)$ is an edge of $\Gamma$, $0$ otherwise for $x,y\in X$.
Then $A$ is a normal matrix, and it is regarded as a normal transformation of $V$.
Let $E^*_i$ be the orthogonal projection from $V$ onto $V^*_i$.
The subalgebra of $\mbox{End}(V)$ generated by $A$ and $E^*_i,~0\leq i\leq D$ is denoted by $T(x_0)$ and called the {\it Terwilliger algebra of $\Gamma$ } with respect to the base vertex $x_0$.
Note that the orthogonal projections have a natural ordering $E^*_0,E^*_1,\cdots,E^*_D$ according to the
distance of $X_i$ from the base vertex $x_0$. In what follows, we set $T=T(x_0)$.

The following proposition is from \cite{pt4}.
\begin{proposition}
Set $W_0=Tx_0$, the smallest $T$-submodule of $V$ containing the base vertex $x_0$.
Then $W_0$ is irreducible as a $T$-module.
\end{proposition}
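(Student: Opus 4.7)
The plan is to prove irreducibility by contradiction, exploiting two structural features: the semisimplicity of $T$ (which makes any non-trivial $T$-submodule a direct summand), and the fact that the base vertex $x_0$ is the unique vertex at distance $0$ from itself, so that $V_0^* = \mathbb{C}x_0$ is one-dimensional.

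First I would record the two key observations: (i) since $X_0 = \{x_0\}$, one has $V_0^* = \mathbb{C}x_0$, so $E_0^*$ is a rank-one projection satisfying $E_0^* x_0 = x_0$ and $E_0^* v \in \mathbb{C}x_0$ for every $v \in V$; (ii) since $E_0^* \in T$, every $T$-submodule of $V$ is stable under $E_0^*$.

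Next, I would suppose for contradiction that $W_0$ is not irreducible. By semisimplicity of $T$, write $W_0 = W' \oplus W''$ with $W'$ and $W''$ both non-zero $T$-submodules, and decompose $x_0 = v' + v''$ accordingly. Applying $E_0^*$ gives $x_0 = E_0^* v' + E_0^* v''$, where $E_0^* v' \in W' \cap \mathbb{C}x_0$ and $E_0^* v'' \in W'' \cap \mathbb{C}x_0$ by the observations above. Hence $E_0^* v' = \alpha x_0$ and $E_0^* v'' = \beta x_0$ with $\alpha + \beta = 1$, and without loss of generality $\alpha \neq 0$. Then $x_0 = \alpha^{-1} E_0^* v' \in W'$, so $W_0 = T x_0 \subseteq W'$, forcing $W'' = 0$, a contradiction.

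I do not anticipate any serious obstacle; the only point requiring care is that $V_0^*$ really is one-dimensional (i.e., that $X_0$ contains only the base vertex), since without this the argument collapses — the whole proof is driven by the rank-one nature of $E_0^*$ together with the $T$-stability of the summands $W'$ and $W''$.
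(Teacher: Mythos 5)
Your proposal is correct and is essentially the paper's own argument: both proofs hinge on the facts that $E_0^*$ lies in $T$ and has one-dimensional image $\mathbb{C}x_0$, and that semisimplicity lets one split $W_0$ into $T$-stable summands, so that applying $E_0^*$ forces $x_0$ into one summand, which must then contain $Tx_0=W_0$. The paper phrases this directly (some irreducible constituent $W$ has $E_0^*W\neq 0$, hence contains $x_0$) rather than by contradiction, but the mechanism is identical.
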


This $T$-submodule $W_0=Tx_0$ is called the {\it principal $T$-module}, or the {\it primary $T$-module}.
The lecture notes \cite{pt4} are not officially published.
We quote a proof of the proposition from \cite{pt4}: Since $T$ is a semi-simple algebra, $W_0$ is a sum of irreducible $T$-submodules.
Apply $E^*_0$ to each irreducible $T$-submodule that appears in $W_0$. Then since $E^*_0W_0=V^*_0=\mathbb{C}x_0$,
there exists an irreducible $T$-submodule $W$ of $W_0$ such that $E^*_0W\neq 0$.
Then $W$ contains $x_0$ and so $W\supseteq Tx_0$ . This implies $W=W_0$.

The following conjecture is due to Jack Koolen and one of the motivations for our study of Terwilliger algebras of trees.
\begin{conjecture}
For almost all finite connected simple graphs $\Gamma$, the Terwilliger algebra $T=T(x_0)$ of $\Gamma$ coincides with the endomorphism algebra
$\rm{End}({\it V})$ of the standard module $V$, regardless of the base vertex $x_0: T=\rm{End}({\it V})$.
\end{conjecture}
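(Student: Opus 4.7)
The plan is to exploit the classical double commutant theorem: for a complex algebra $T\subseteq\mathrm{End}(V)$ acting on a finite-dimensional $V$, one has $T=\mathrm{End}(V)$ if and only if $V$ is an irreducible $T$-module (the condition $\mathrm{End}_T(V)=\mathbb{C}I$ is then automatic by Schur). By Proposition 1, the principal module $W_0=Tx_0$ is always irreducible, so the conjecture reduces to showing that for almost all $\Gamma$ and every base vertex $x_0$, we have $Tx_0=V$, or equivalently that the commutant $T'=\{M\in\mathrm{End}(V)\mid MA=AM,\ ME_i^*=E_i^*M\ \forall i\}$ is trivial.

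First I would work in the Erd\H{o}s--R\'enyi model $G(n,1/2)$ and invoke standard facts that hold with probability tending to $1$: the diameter equals $2$, the adjacency matrix $A$ has simple spectrum, and $\mathrm{Aut}(\Gamma)$ is trivial. Almost surely the distance decomposition is $X=X_0\cup X_1\cup X_2$ with $|X_1|,|X_2|\approx n/2$, and the sole constraint $MA=AM$ already confines $M$ to the $n$-dimensional polynomial algebra $\mathbb{C}[A]$. The extra requirement $ME_i^*=E_i^*M$ forces $M$ to be block diagonal with respect to the partition $\{X_0,X_1,X_2\}$, which is an extremely restrictive intersection.

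Next I would translate the problem explicitly. Writing
\[
A=\begin{pmatrix}0 & \mathbf{1}^{T} & 0\\ \mathbf{1} & A_1 & B\\ 0 & B^{T} & A_2\end{pmatrix},\qquad M=\begin{pmatrix}a & 0 & 0\\ 0 & M_1 & 0\\ 0 & 0 & M_2\end{pmatrix},
\]
the commutation $MA=AM$ is equivalent to the system
\[
M_1A_1=A_1M_1,\quad M_2A_2=A_2M_2,\quad M_1B=BM_2,\quad M_1\mathbf{1}=a\mathbf{1},\quad \mathbf{1}^{T}M_1=a\mathbf{1}^{T}.
\]
Here $A_1,A_2$ are (conditionally) independent random symmetric $\{0,1\}$-matrices of size roughly $n/2$, and $B$ is an independent random $\{0,1\}$ bipartite block. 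The first two equations locate $M_1,M_2$ in the (generically $1$-dimensional per eigenspace) commutants of $A_1,A_2$; the intertwining $M_1B=BM_2$ is the decisive rigidity. The goal is to prove that almost surely the unique solution is $M=aI$, then to upgrade the result to hold simultaneously for every choice of $x_0$ by a union bound over the $n$ vertices.

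The hardest step, and the reason the statement is only conjectural, is this intertwining rigidity. One needs to rule out, with high probability, that a non-scalar polynomial $p(A_1)$ is conjugated by $B$ into a polynomial $q(A_2)$; equivalently that no non-trivial linear combination of eigenvectors of $A_1$ is sent by $B$ into the span of a proper spectral projection of $A_2$. This requires quantitative eigenvector delocalization and spectral anti-concentration estimates for independent Wigner-type matrices linked by an independent random bipartite operator, along the lines of Erd\H{o}s--Schlein--Yau or Tao--Vu. I expect the probabilistic analysis of this joint spectral picture, rather than the algebraic reduction, to be the decisive obstacle; once it is in hand, extension from $G(n,1/2)$ to other dense random-graph models and to the ``regardless of $x_0$'' clause should follow by routine uniform estimates.
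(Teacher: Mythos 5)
The statement you are trying to prove is Conjecture~2, which the paper does not prove: it is an open conjecture attributed to Jack Koolen, and the paper only settles it for trees (Theorem~4), by an entirely combinatorial route -- the key lemma that each refined projection $E_i^*(\alpha)$ lies in $T$, plus the identification of $W_0$ with the span of the $H$-orbit sums and an appeal to Burnside. So there is no ``paper's proof'' to match yours against, and more importantly your proposal is not a proof either. You say so yourself: the intertwining rigidity step (ruling out, with high probability, a non-scalar $M_1$ with $M_1A_1=A_1M_1$, $M_2A_2=A_2M_2$, $M_1B=BM_2$, $M_1\mathbf{1}=a\mathbf{1}$) is left entirely open, and you explicitly identify it as ``the reason the statement is only conjectural.'' A plan whose decisive step is acknowledged to be missing is a research program, not a proof; as it stands the gap is total at exactly the point where all the difficulty lives.

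Two further remarks on the parts you did write. First, your reduction is correct and is the same one the paper uses for trees: since $W_0=Tx_0$ is irreducible (Proposition~1) and contains $x_0\neq 0$, one has $T=\mathrm{End}(V)$ if and only if $V$ is $T$-irreducible if and only if $Tx_0=V$. Second, your block decomposition is more elaborate than necessary and hides a cleaner sufficient condition: $Tx_0\supseteq\mathbb{C}[A]x_0$, so it suffices that $x_0$ be a cyclic vector for $A$, which for a simple-spectrum $A$ means exactly that no eigenvector of $A$ vanishes at $x_0$. Thus the conjecture would follow from ``almost surely $A$ has simple spectrum and no eigenvector has a zero coordinate'' -- the first is a theorem of Tao--Vu, the second is itself a hard open-ended problem in random matrix theory. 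Either way, the probabilistic input you would need is not supplied, so the proposal cannot be accepted as a proof of the conjecture; at best it is a plausible outline of how one might attack it, orthogonal to anything done in the paper.
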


In this paper, we precicely determine when $T=\mbox{End}(V)$ holds for a tree: $T=\mbox{End}(V)$ if and only if the tree does not have a symmetry with respect to the base vertex (see Theorem 4).

Another motivation of our study is the theme of the spectral graph theory: we shift from the adjacency algebra $\langle A\rangle$ of a graph
$\Gamma$ to the Terwilliger algebra $T=\langle A,E^*_i~|~0\leq i\leq D \rangle$, and ask how much the standard $T$-module $V$ determines the
graph structure of $\Gamma$. Particularly we ask:
\begin{problem}
Let $\Gamma$, ${\Gamma}^{\prime}$ be finite connected simple graphs and $T$, $T^{\prime}$  their Terwilliger algebras with $V$, $V^{\prime}$ the
standard modules, respectively.
When does $V\simeq V^{\prime}$ as $T$-modules imply $\Gamma \simeq {\Gamma}^{\prime}$ as graphs?
\end{problem}

If $V\simeq V^{\prime}$ implies $\Gamma \simeq {\Gamma}^{\prime}$, we say that the $T$-module $V$ {\it recognizes} the graph $\Gamma$.
When $V$, $V^{\prime}$ are the standard $T$-modules, we simply say $T$ recognizes $\Gamma$.
In this paper, we show that the Terwilliger algebra recognizes the tree (see Theorem 5).
Note that the adjacency algebra does not recognize the tree \cite{ajs}.

\subsection{Terwilliger algebras of trees}
In this subsection, we summarize our main results about the Terwilliger algebra of a tree.
In this paper, a tree means a finite tree.

Let $\Gamma$ be a tree and $X$  the vertex set of $\Gamma$.
Fix a base vertex $x_0\in X$ and form the Terwilliger algebra $T=T(x_0)$ with respect to $x_0$, in the way we explained in Section 1.2.

Let $V=\mathbb{C}X$ be the standard module. By ${\Gamma}^{(x_0)}$ we denote the rooted tree $\Gamma$ with $x_0$ the root.
Let $H$ be the automorphism group of ${\Gamma}^{(x_0)}$: $H=\mbox{Aut}({\Gamma}^{(x_0)})$.
So $H$ is the stabilizer of $x_0$ in the automorphism group of $\Gamma$: $G=\mbox{Aut}(\Gamma)$, $H=G_{x_0}$.
Note that the generators of $T$ commute with the action of $H$ on $V$. So we have
\begin{align}
&T\subseteq \mbox{Hom}_H(V,V),
\end{align}
where $\mbox{Hom}_H(V,V)$ is the centralizer algebra of $H$, i.e.,
the subalgebra of $\mbox{End}(V)$ consisting of all the linear transformations of $V$ that commute with the action of $H$ on $V$.

Suppose $T=\mbox{End}(V)$ holds in Conjecture 1.
Then we have $\mbox{Hom}{\it_H(V,V)}$=$\mbox{End}(V)$ by (5) and so $H=1$,
since $H\neq 1$ implies $\mbox{Hom}_H(V,V)\subsetneqq \mbox{End}(V)$.
We prove the converse holds in the case of trees, i.e., $T=\mbox{End}(V)$ if $H=1$.
Actually we show a stronger result as we see in the following theorem.
For a subset $Y$ of $X$, we denote by $\underline{Y}$ the sum of elements of $Y$: $\underline{Y}=\sum \limits_{y\in Y}y\in V$.
Let $W_0=Tx_0$ be the principal $T$-module. Then we have:
\begin{theorem}
$W_0$ is linearly spanned by $\underline{Y}$, $Y\in \mbox{Orb}(H,X)$, where $\mbox{Orb}(H,X)$ is the set of $H$-orbits on $X$:
$$W_0=\mbox{Span}\{\underline{Y}~|~Y\in \mbox{Orb}(H,X)\}.$$
\end{theorem}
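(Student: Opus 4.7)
The subspace $V^H := \{v \in V : hv = v \text{ for all } h \in H\}$ has $\{\underline{Y} : Y \in \mathrm{Orb}(H,X)\}$ as an orthogonal basis (orbit sums), so the asserted identity is equivalent to $W_0 = V^H$. The plan is to establish both inclusions.

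\emph{Easy direction $W_0 \subseteq V^H$.} By (5), $T \subseteq \mathrm{Hom}_H(V,V)$, so $V^H$ is $T$-invariant; since $x_0 \in V^H$, we conclude $W_0 = Tx_0 \subseteq V^H$.

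\emph{Hard direction $V^H \subseteq W_0$.} I would argue by induction on the level $i$ that $\underline{Y} \in W_0$ for every orbit $Y \subseteq X_i$. The base case $i=0$ is immediate since $X_0 = \{x_0\}$. For the inductive step, I would split $A = A^+ + A^-$ where $A^{\pm} := \sum_j E_{j\pm 1}^* A E_j^* \in T$ (valid because a tree has no edges within one level, so $E_i^* A E_i^* = 0$). Given an orbit $Y \subseteq X_i$ with parent orbit $Z := p(Y) \subseteq X_{i-1}$, the hypothesis gives $\underline{Z} \in W_0$, and the tree structure (each level-$i$ vertex has a unique parent at level $i-1$) yields
\[
A^+ \underline{Z} \;=\; \sum_{\substack{Y' \in \mathrm{Orb}(H, X_i)\\ p(Y') = Z}} \underline{Y'} \;\in\; W_0,
\]
the sibling sum of all children orbits of $Z$. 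If $Z$ has only one child orbit, this already equals $\underline{Y}$ and we are done.

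\emph{Separation of siblings — the main obstacle.} When $Z$ has several children orbits $Y_1,\dots,Y_r$, the task is to extract each $\underline{Y_j}$ from their sum. My primary tool would be the level-preserving number operators $N_k := (A^-)^k (A^+)^k \in T$, which act diagonally on $V_i^*$ by $N_k \underline{Y_j} = n_k(Y_j)\,\underline{Y_j}$, where $n_k(Y_j)$ is the number of level-$(i+k)$ descendants of any fixed vertex in $Y_j$. When the profiles $(n_k(Y_j))_k$ are linearly independent across $j$, taking suitable linear combinations of the $N_k$'s applied to $A^+\underline{Z}$ isolates each $\underline{Y_j}$. The hard case — and the genuine obstacle — is when two sibling orbits have non-isomorphic branches yet identical level-descendant profiles; this forces one to use more refined diagonal operators of the form $(A^-)^k B (A^+)^k$ with $B \in T$ a level-preserving operator on $V_{i+k}^*$, constructed via a secondary induction on the depth of the subtrees below $Z$. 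The crux of the argument is to verify that $T$ produces enough such operators to separate every pair of non-isomorphic sibling branches, which amounts to showing that the rooted isomorphism type of each branch is faithfully encoded in the walk statistics accessible from $T$.
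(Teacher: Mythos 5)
Your overall architecture matches the paper's: the easy inclusion $W_0\subseteq V^H$ via $T\subseteq \mathrm{Hom}_H(V,V)$ (the paper phrases this as $W_0\subseteq V_{\chi_0}=Sx_0$ with $S$ the centralizer algebra, but your version is fine and even a bit more direct), and the hard inclusion by induction on the level $i$, pushing the parent orbit sum $\underline{Z}$ down with the raising operator $E_i^*AE_{i-1}^*$ and then separating the resulting sum of sibling orbits. Your identification of which siblings must be separated is also correct: given a common parent orbit $Z$, the children orbits are exactly the fibers of ``isomorphism class of the branch $\Gamma^{(x)}$'' on $\psi^{-1}(Z)$, so what is needed is precisely that the orthogonal projections $E_i^*(\alpha)$ onto the classes $X_i(\alpha)=\{x\in X_i : \Gamma^{(x)}\text{ of type }\alpha\}$ lie in $T$.

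That, however, is where the genuine gap sits, and you have flagged it yourself without closing it: ``the crux of the argument is to verify that $T$ produces enough such operators to separate every pair of non-isomorphic sibling branches.'' This is not a routine verification to be deferred; it is the key lemma of the whole paper (Lemma 6), and your primary tool $N_k=(A^-)^k(A^+)^k$ genuinely fails, since non-isomorphic rooted trees can have identical level-size profiles. The missing idea is a \emph{downward} induction on $i$ (from $i=D$, where there is a single class, toward $i=0$): assuming the projections $E_{i+1}^*(\beta)$, $\beta\in\Lambda_{i+1}$, already lie in $T$, the operators $E_i^*AE_{i+1}^*(\beta)AE_i^*$ are diagonal on $V_i^*$ with eigenvalue $n_\beta(x)$ at $x$, the number of children of $x$ whose branch has type $\beta$; and the combinatorial fact that the rooted isomorphism type of $\Gamma^{(x)}$ is determined by the multiset of types of its children's branches says exactly that the maximal common eigenspaces of this commuting family are the $V_i^*(\alpha)$, whence $E_i^*(\alpha)\in\langle E_i^*AE_{i+1}^*(\beta)AE_i^*\mid \beta\in\Lambda_{i+1}\rangle\subseteq T$. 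This is precisely your proposed refined operator $(A^-)^kB(A^+)^k$ with $k=1$ and $B=E_{i+1}^*(\beta)$, so your instinct about what shape the fix should take is right; but as written the proposal asserts rather than proves that such $B$'s are available in $T$, and without that the induction does not close. (Minor additional point: for the interpolation step you need the eigenvalue profiles of distinct classes to be \emph{pairwise distinct}, not ``linearly independent''; distinctness of the vectors $(n_\beta(x))_\beta$ across classes is what the rooted-tree fact gives you.)
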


Suppose $H=1$, then $W_0=V$ by this theorem.
So by Proposition 1, the standard module $V$ is irreducible as a $T$-module.
If $T\subsetneqq \mbox{End}(V)$, $V$ can not be an irreducible $T$-module by Burnside's Theorem.
Therefore $H=1$ implies $T=\mbox{End}(V)$.

As for Problem 3, we also show a stronger result, i.e., we show that the principal $T$-module $W_0$ recognizes the tree.
Note that if the principal $T$-module $W_0$ recognizes the tree, so does the standard $T$-module $V$.

Suppose we are given a finite connected simple graph ${\Gamma}^{\prime}$ besides the tree $\Gamma$.
Let $T$, $T^{\prime}$ be the Terwilliger algebras of $\Gamma$, ${\Gamma}^{\prime}$  and $W_0$, ${W_0}^{\prime}$ the principal $T$-modules of  $\Gamma$, ${\Gamma}^{\prime}$, respectively.
Recall that $T|_{W_0}$, $T^{\prime}|_{{W_0}^{\prime}}$ become Terwilliger algebras with standard modules $W_0$, ${W_0}^{\prime}$, respectively,
and that there is a natural ordering for the generators of $T|_{W_0}$, $T^{\prime}|_{{W_0}^{\prime}}$, respectively.
\begin{theorem}
If $W_0\simeq {W_0}^{\prime}$ as $T$-modules, then ${\Gamma}^{\prime}$ is a tree.
Moreover we have ${\Gamma}^{(x_0)}\simeq {{\Gamma}^{\prime}}^{({x_0}^{\prime})}$ as rooted trees,
where $x_0$, ${x_0}^{\prime}$ are the base vertices of $\Gamma$, ${\Gamma}^{\prime}$, respectively.
\end{theorem}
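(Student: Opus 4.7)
The plan is to reconstruct $\Gamma'^{(x_0')}$ from $W_0'$ level by level, starting from the explicit description of $W_0$ given by Theorem~4. By that theorem, $W_0$ has the orthogonal basis $\{\underline{Y} : Y \in \mathrm{Orb}(H,X)\}$ with $\underline{Y} \in E_i^* W_0$ for $Y \subseteq X_i$ and $\|\underline{Y}\|^2 = |Y|$. A direct calculation, using that in a tree each non-root vertex has a unique parent, gives
\[
A\,\underline{Y} \;=\; \frac{|Y|}{|\pi(Y)|}\,\underline{\pi(Y)} \;+\; \sum_{Y'\text{ child orbit of }Y} \underline{Y'},
\]
so the raise matrix $E_{i+1}^*AE_i^*$ has $0$/$1$ entries in the orbit basis, and the rooted orbit tree together with the branching numbers $|Y|/|\pi(Y)|$ reconstructs $\Gamma^{(x_0)}$ up to rooted-tree isomorphism. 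After normalising $\varphi(x_0) = x_0'$, the vectors $v_Y := \varphi(\underline{Y}) \in E_i^{*\prime} W_0'$ form an orthogonal system with $\|v_Y\|^2 = |Y|$ obeying the same raise--lower relations under $A'$.

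The body of the proof is an induction on $i$ establishing the compound statement that (i) the induced subgraph of $\Gamma'$ on $\bigcup_{j\le i}X_j'$ is a rooted tree isomorphic, through a level-preserving bijection $\psi_i$, to the corresponding subtree of $\Gamma$; (ii) $\varphi(\underline{X_j}) = \underline{X_j'}$ for $j \le i$; and (iii) for each orbit $Y \subseteq X_j$ with $j \le i$, $v_Y$ equals the characteristic vector $\underline{\psi_i(Y)}$ of a subset of $X_j'$, and the subsets $\psi_i(Y)$ partition $X_j'$ compatibly with the parent relation. The base case $i = 0$ is immediate; for $i = 1$, the identity $A'x_0' = \underline{X_1'}$ gives (ii), whence applying the transported vanishing $E_1^{*\prime}A'E_1^{*\prime}\big|_{W_0'} = 0$ to the \emph{positive} vector $\underline{X_1'}$ forces $\deg_{X_1'}(z) = 0$ for every $z \in X_1'$, yielding (i) and, with the raise-lower decomposition, (iii).

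For the inductive step from $i$ to $i+1$, the central identity is
\[
\varphi(\underline{X_{i+1}}) \;=\; E_{i+1}^{*\prime}A'\,\underline{X_i'} \;=\; \sum_{z \in X_{i+1}'} p(z)\,z,
\]
with $p(z)$ the number of neighbours of $z$ in $X_i'$. The transported same-level vanishing $E_{i+1}^{*\prime}A'\,v_Y = 0$ for each orbit $Y$ at level $i+1$, summed over $Y$, yields $S\cdot\varphi(\underline{X_{i+1}}) = 0$, where $S$ is the same-level adjacency matrix at level $i+1$; since the coefficients $p(z)$ are all positive, this kills all same-level edges at level $i+1$. For the unique-parent condition $p(z) = 1$, I would combine the isometry $\sum_z p(z)^2 = |X_{i+1}|$ with an orbit-by-orbit child-count comparison, namely $E_i^{*\prime}A'\,v_Y = (|Y|/|\pi(Y)|)\,\underline{\psi_i(\pi(Y))}$ combined with the tree-side identity $\sum_{u \in \pi(Y)} c(u) = |Y|$, to pin down each $v_Y$ as a characteristic vector via a positivity argument on $Pv_Y$, where $P$ is the $X_i'$-to-$X_{i+1}'$ adjacency block; $p(z) = 1$ then follows from $\sum_Y v_Y = \underline{X_{i+1}'}$, and the partition $\{\psi_{i+1}(Y)\}$ of $X_{i+1}'$ is read off directly from the orthogonality of the $v_Y$'s and the raise image $E_{i+1}^{*\prime}A'\underline{\psi_i(Z)} = \sum_{Y:\pi(Y)=Z}v_Y$.

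The main obstacle is step (iii): showing that the transported $v_Y$ are genuine $0$/$1$-vectors, not merely orthogonal combinations of the standard basis of $\mathbb{C}X_{j}'$. The interdependence of (i)--(iii) in the induction is the critical bootstrap — knowing $v_Y = \underline{\psi_i(Y)}$ at level $j$ is what allows us to transport the tree identities that then force $p(z)=1$ and the $0$/$1$ structure of $v_{Y'}$ at level $j+1$. This is where the tree hypothesis on $\Gamma$ enters most crucially, since the clean $0$/$1$ form of the raise matrix, together with the positivity and orthogonality inherited through $\varphi$, is what drives the refinement. Once the induction runs through level $D$, the bijection $\psi_D : X \to X'$ is a rooted-tree isomorphism $\Gamma^{(x_0)} \simeq \Gamma'^{(x_0')}$, and in particular $\Gamma'$ is a tree.
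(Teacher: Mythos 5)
Your strategy---transport the orbit basis of $W_0$ supplied by Theorem 4 across the isometry $\varphi$ and rebuild $\Gamma'$ level by level---is genuinely different from the paper's. The paper never tries to show that $\varphi$ carries characteristic vectors to characteristic vectors: it uses Lemma 6 (the projections $E_i^*(\alpha)$ onto the classes of vertices with isomorphic subtrees lie in $T$, hence transport under the $T$-algebra isomorphism) to realize the principal module of each branch $\Gamma^{(x)}$, $x\in X_1(\alpha)$, as the submodule $\hat T\,\underline{X_1(\alpha)}\subseteq W_0$, inducts on the depth $D$, and recovers the multiplicities $|X_1(\alpha)|$ as the norms of $E_1^*(\alpha)Ax_0$. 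This difference matters, because the step you yourself flag as the main obstacle --- (iii), that each $v_Y=\varphi(\underline{Y})$ is a $0$/$1$ vector --- is a genuine gap, and the constraints you propose do not close it.

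Concretely: let $Z=\{u\}$ be a parent orbit at level $i$ with two singleton child orbits $Y_1,Y_2$ (the two children of $u$ carry non-isomorphic subtrees), and suppose on the $\Gamma'$ side the two level-$(i+1)$ vertices $z_1,z_2$ are adjacent to $\psi_i(u)$ and to nothing else in $X_i'$. Then $v_{Y_1}=\tfrac12(1+\sqrt{-1})z_1+\tfrac12(1-\sqrt{-1})z_2$ and $v_{Y_2}=\tfrac12(1-\sqrt{-1})z_1+\tfrac12(1+\sqrt{-1})z_2$ satisfy every condition available at this stage: they are orthogonal, have norm $1=\sqrt{|Y_j|}$, sum to $E_{i+1}^{*\prime}A'\underline{\psi_i(Z)}=z_1+z_2$, are annihilated by the same-level block, and satisfy $E_i^{*\prime}A'v_{Y_j}=(|Y_j|/|Z|)\underline{\psi_i(Z)}$. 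So orthogonality, the norms, and the raise/lower relations between levels $i$ and $i+1$ do not force $0$/$1$ entries; a ``positivity argument on $Pv_Y$'' cannot work because only the lowered vector $E_i^{*\prime}A'v_Y$ is known to be non-negative, while the coordinates of $v_Y$ itself are a priori arbitrary complex numbers. (A posteriori the claim is true: $W_0'$ is irreducible by Proposition 1, so once the theorem is known every normalized module isomorphism sends orbit sums to orbit sums --- but invoking that is circular.) The same defect already sits in your base case $i=1$, and it undercuts the unique-parent conclusion $p(z)=1$, which you derive from the $0$/$1$ structure; the isometry identity $\sum_z p(z)^2=|X_{i+1}|$ only yields $|X_{i+1}'|\le|X_{i+1}|$, and nothing you list supplies the reverse inequality. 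To close the gap one needs either the projections $E_i^*(\alpha)$ of Lemma 6 or constraints tying level $i+1$ to all deeper levels simultaneously --- which is essentially what the paper's induction on $D$ accomplishes.
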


Theorem 4 and Theorem 5 will be proved in Section 3 and Section 4.

\section{The key lemma}

We keep the notations in Section 1.3.

Let $\Gamma$ be a finite tree and $X$  the  vertex set of $\Gamma$.
Fix a base vertex $x_0\in X$.
Then we have the partition $X=\bigcup \limits_{i=0}^{D}{X_i}$  given by (4). For $x\in X_i$, set
\begin{align}
&{\Gamma}^{(x)}=\{y\in X~|~\partial(x_0,y)=\partial(x_0,x)+\partial(x,y)\},
\end{align}
where $\partial$ stands for the distance function of  $\Gamma$ as in (4).
Allowing abuse of the notation, we regard ${\Gamma}^{(x)}$ as the rooted tree with $x$ the root that is induced on the subset ${\Gamma}^{(x)}$ of $X$.

We introduce an equivalence relation $\sim$ on $X_i$ by defining $x\sim x^{\prime}$ for $x$, $x^{\prime}\in X_i$
if and only if ${\Gamma}^{(x)}$ and ${\Gamma}^{(x^{\prime})}$ are isomorphic as rooted trees.
Let $X_i(\alpha)$, $\alpha \in {\Lambda}_i$ be the equivalence classes on $X_i$:
\begin{align}
&X_i=\bigcup \limits_{\alpha \in {\Lambda}_i}X_i(\alpha).
\end{align}
Accordingly we have a decomposition of the subspace $V_i^*=\mathbb{C}X_i$ into the direct sum of mutually orthogonal subspaces $V^*_i(\alpha)=\mathbb{C}X_i(\alpha), ~\alpha \in {\Lambda}_i$:
\begin{align}
&V^*_i=\bigoplus \limits_{\alpha \in {\Lambda}_i}V^*_i(\alpha).
\end{align}
Let $E^*_i(\alpha)$ be the orthogonal projection from $V=\mathbb{C}X$ onto $V^*_i(\alpha)$. Then we have
\begin{align}
&E^*_i=\sum \limits_{\alpha \in {\Lambda}_i}E^*_i(\alpha)
\end{align}
with $E^*_i(\alpha)E^*_i(\beta)={\delta}_{\alpha \beta}E^*_i(\alpha)$,
where ${\delta}_{\alpha \beta}=1$ if $\alpha = \beta,~ 0$ otherwise.
Note that we have $I=\sum \limits_{i=0}^{D} \sum \limits_{\alpha \in {\Lambda}_i}E^*_i(\alpha)$
with $E^*_i(\alpha)E^*_j(\beta)={\delta}_{i j}{\delta}_{\alpha \beta}E^*_i(\alpha),$ where $I$ is the identity map.

The following lemma is the key to our proofs of Theorem 4 and Theorem 5.
Recall that $A$ is the adjacency matrix of the tree $\Gamma$ and
the Terwilliger algebra $T=T(x_0)$ of $\Gamma$ is the subalgebra of $\mbox{End}(V)$ generated by $A,~E^*_i$, $0\leq i\leq D$.

\begin{lemma}
For $0\leq i\leq D-1$, the subalgebra of $\rm{End}({\it V})$ generated by $E^*_i(\alpha)$, $\alpha \in {\Lambda}_i$
coincides with that generated by $E^*_iAE^*_{i+1}(\beta)AE^*_i$, $\beta \in {\Lambda}_{i+1}$:
\begin{align}
&\langle E^*_i(\alpha)~|~ \alpha \in {\Lambda}_i \rangle =\langle E^*_iAE^*_{i+1}(\beta)AE^*_i ~|~ \beta \in {\Lambda}_{i+1} \rangle.
\end{align}
In particular, the Terwilliger algebra $T$ contains every $E^*_i(\alpha)$:
\begin{align}
&E^*_i(\alpha) \in T,~0\leq i\leq D,~\alpha \in {\Lambda}_i.
\end{align}
\end{lemma}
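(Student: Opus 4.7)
My plan is to first show that each operator $B_\beta := E^*_i A E^*_{i+1}(\beta) A E^*_i$ acts diagonally on $V_i^*$ with respect to the basis $X_i$, identify its diagonal entries combinatorially, and then argue that these commuting diagonals together resolve precisely the equivalence classes $X_i(\alpha)$. The statement (11) will follow by downward induction on $i$. For diagonality, I would use the tree property: each $y \in X_{i+1}$ has a unique neighbor in $X_i$, namely its parent on the shortest path to $x_0$. Hence the $(x,x')$-entry $\sum_{y\in X_{i+1}(\beta)}A_{x,y}A_{y,x'}$ of $B_\beta$ vanishes unless $x=x'=\operatorname{parent}(y)$, and the diagonal entry at $x$ is $c(x,\beta):=\#\{y \in X_{i+1}(\beta) : y \text{ is a child of } x\}$.

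The crux is the claim that for $x,x'\in X_i$, one has $x\sim x'$ if and only if $c(x,\beta)=c(x',\beta)$ for every $\beta\in\Lambda_{i+1}$. The forward direction is direct: any rooted-tree isomorphism $\phi:\Gamma^{(x)}\to\Gamma^{(x')}$ restricts on each subtree $\Gamma^{(y)}$ below a child $y$ of $x$ to an isomorphism $\Gamma^{(y)}\simeq\Gamma^{(\phi(y))}$, bijecting $\beta$-type children of $x$ with $\beta$-type children of $x'$ for every $\beta$. For the converse, matching profiles allow me to pair up the children of $x$ with those of $x'$ so that paired children have isomorphic rooted subtrees; choosing such an isomorphism for each pair and assembling them (with $x\mapsto x'$) yields $\Gamma^{(x)}\simeq\Gamma^{(x')}$, since $\Gamma^{(x)}=\{x\}\sqcup\bigsqcup_y\Gamma^{(y)}$ decomposes disjointly over the children $y$ of $x$ (again by the tree property).

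Given this, the $B_\beta$'s are mutually commuting diagonal operators on $V_i^*$ whose joint level sets on $X_i$ are exactly the partition $\{X_i(\alpha)\}$. Multivariable Lagrange interpolation on the finitely many distinct tuples $\bigl(c(x,\beta)\bigr)_{\beta\in\Lambda_{i+1}}$ then writes each $E^*_i(\alpha)$ as a polynomial in the $B_\beta$'s inside the algebra of diagonal operators on $V_i^*$ (with $E^*_i$ serving as identity); the reverse containment is immediate, so (10) follows. For (11) I induct downward on $i$: at $i=D$ every $\Gamma^{(x)}$ is a single vertex, so $|\Lambda_D|=1$ and $E^*_D(\alpha)=E^*_D\in T$; assuming $E^*_{i+1}(\beta)\in T$ for all $\beta$ places each $B_\beta$ in $T$, and (10) combined with $E^*_i\in T$ then forces $E^*_i(\alpha)\in T$.

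The hard part will be the reverse direction of the middle paragraph—reconstructing a rooted-tree isomorphism from matching multiplicity profiles of child-types—since this is where the recursive essence of rooted-tree isomorphism is genuinely used. Everything else (diagonality, interpolation, downward induction) is routine bookkeeping once that combinatorial fact is in hand.
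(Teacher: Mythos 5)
Your argument is correct and follows essentially the same route as the paper: diagonality of $E^*_iAE^*_{i+1}(\beta)AE^*_i$ on $V^*_i$ with diagonal entry the number of $\beta$-type children, the observation that these child-type profiles determine the isomorphism class of $\Gamma^{(x)}$, interpolation to recover the projections $E^*_i(\alpha)$, and downward induction on $i$ for (11). If anything you are slightly more careful than the paper, both in sketching the profile-matching equivalence (which the paper merely asserts) and in noting that $E^*_i$ must serve as the identity in the interpolation --- without it the literal equality (10) can fail when some class $X_i(\alpha)$ consists of childless vertices at level $i<D$, though (11) is unaffected since $E^*_i\in T$ anyway.
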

\begin{proof}
For $x \in X_i$ and $\beta \in {\Lambda}_{i+1},$ let $n_{\beta}(x)$ denote the number of vertices in $X_{i+1}(\beta)$ that are adjacent to $x$:
\begin{align}
&n_{\beta}(x)=\sharp \{y\in X_{i+1}(\beta)~|~x~and~y~are~adjacent~in~\Gamma\}.
\end{align}
Observe that each $x$ in $X_i$ is an eigenvector of $ E^*_iAE^*_{i+1}(\beta)AE^*_i$ and it belongs to the eigenvalue $n_{\beta}(x)$:
\begin{align}
&E^*_iAE^*_{i+1}(\beta)AE^*_i x= n_{\beta}(x)x.
\end{align}
Therefore $V^*_i= \mathbb{C}X_i$ ~is decomposed into the direct sum of maximal common eigensp-\\aces of $E^*_iAE^*_{i+1}(\beta)AE^*_i$, $\beta \in {\Lambda}_{i+1}$, each of which has a basis consisting of some elements from $X_i$.

For $x$, $x^{\prime} \in X_i$, it holds that ${\Gamma}^{(x)}$  and ${\Gamma}^{(x^{\prime})}$ are isomorphic as rooted trees
if and only if $n_{\beta}(x)=n_{\beta}(x^{\prime})$ for all $\beta \in {\Lambda}_{i+1}$.
Therefore  $x$, $x^{\prime} \in X_i$ belong to the same $X_i(\alpha)$ for some $\alpha \in {\Lambda}_i$
if and only if they belong to the same maximal common eigenspaces of $E^*_iAE^*_{i+1}(\beta)AE^*_i$, $\beta \in {\Lambda}_{i+1}$,
i.e., $V^*_i(\alpha)=\mathbb{C}X_i(\alpha)$, $\alpha \in {\Lambda}_i$ are the maximal common eigenspaces of $E^*_iAE^*_{i+1}(\beta)AE^*_i$, $\beta \in {\Lambda}_{i+1}$.
Since $E^*_i(\alpha)$ is the projection from $V$ onto $V^*_i(\alpha)$, we have (10).

Observe $|{\Lambda}_D|=1$ and so $E^*_D(\alpha)=E^*_D$, i.e., (11) holds for $i=D$. By induction on $i$,
we may assume the right hand side of (10) is contained in $T$.
Then the left hand side is contained in $T$, i.e., (11) holds for all $i$.
\end{proof}

\section{The structure of the principal $T$-module of a tree: Proof of Theorem 4}
In this section, we prove Theorem 4, which gives the structure of the principal $T$-module of a tree.

Recall that $H$ is the automorphism group of the rooted tree ${\Gamma}^{(x_0)}$.
Let $\mbox{Irr}(H, X)$ denote the set of irreducible characters of $H$
that appear in the permutation representation of $H$ acting on the vertex set $X$ of ${\Gamma}^{(x_0)}$.
So the standard module $V=\mathbb{C}X$ affords the permutation character of $H$.
For $\chi \in \mbox{Irr}(H, X),$ let $V_{\chi}$ denote the sum of irreducible $H$-submodules of $V$ that
afford $\chi$.
Then we have the homogeneous component decomposition of the $H$-module $V$, i.e.,
$V$ is the direct sum of $V_{\chi}, \chi \in \mbox{Irr}(H, X)$:
\begin{align}
&V=\bigoplus \limits_{\chi \in \rm{Irr}({\it H, X})}V_{\chi}.
\end{align}
Let ${\chi}_0=1_H,$ the trivial character of $H$. Then we have
\begin{align}
&V_{{\chi}_0}=\mbox{Span} \{\underline{Y}~|~ Y\in \mbox{Orb}(H,X)\},
\end{align}
where $\mbox{Orb}(H,X)$ is the set of $H$-orbits on $X$ and $\underline{Y}=\sum \limits_{y\in Y}y \in V$ for $Y\in \mbox{Orb}(H,X).$

Recall that $\mbox{Hom}_H(V,V)$ is the centralizer algebra of the $H$-module $V.$ Set $S=$\\ $\mbox{Hom}_H(V,V).$
For the centralizer algebra $S$ of a finite group $H,$
it is well-known in general that $S$ is a semi-simple algebra and that the homogeneous component
decomposition of the $S$-module $V$ is also given by (14), i.e.,
any irreducible $S$-submodule of $V$ is contained in some $V_{\chi}$,
and irreducible $S$-submodules of $V$ are isomorphic if and only if they belong to the same $V_{\chi}$.
It is also well-known that $V_{{\chi}_0}$ is irreducible as an $S$-module.

The root $x_0$ of the tree $\Gamma$ is fixed by $H$ and so it is contained in $V_{{\chi}_0}$ by (15).
Since $V_{{\chi}_0}$ is irreducible as an $S$-module, we have $V_{{\chi}_0}=Sx_0.$
On the other hand, we have $T\subseteq S$ by (5).
This implies that the principal $T$-module $W_0=Tx_0$ is contained in $V_{{\chi}_0}=Sx_0$: $W_0\subseteq V_{{\chi}_0}.$
We want to show $ W_0\supseteq V_{{\chi}_0}$.
By (15), it is enough to show $W_0\ni \underline{Y}$ for all $Y\in \mbox{Orb}(H,X)$.
We may assume $Y\subseteq X_i(\alpha)$ for some $i,~\alpha$:
observe that if $x,~x^{\prime}$ belong to the same $H$-orbit, then $\partial(x_0,x)=\partial(x_0,x^{\prime})$, and ${\Gamma}^{(x)}$, ${\Gamma}^{(x^{\prime})}$ are isomorphic as rooted trees.

Since $\Gamma$ is a tree, we have the adjacency mapping from the set $X_i$ given in (4) to the set $X_{i-1}$:
$$\psi: X_i \rightarrow X_{i-1},~x \mapsto \psi(x),$$
where $\psi(x)$ is the unique element of $X_{i-1}$ that is adjacent to $x \in X_i$.
It is obvious that if $x$, $x^{\prime} \in X_i$ belong to the same $H$-orbit,
then so do $\psi(x)$, $\psi (x^{\prime}) \in X_{i-1}$.
Conversely suppose $\psi (x)$, $\psi (x^{\prime}) \in X_{i-1}$ belong to the same $H$-orbit.
Then $x$, $x^{\prime} \in X_i$ belong to the same $H$-orbit if and only if there exists $\alpha \in {\Lambda}_i$ such that
$x$, $x^{\prime} \in X_i(\alpha),$ i.e., ${\Gamma}^{(x)}$  and ${\Gamma}^{(x^{\prime})}$ are isomorphic as rooted trees.

Let $Y$ be a $H$-orbit in $X_i(\alpha)$. Set $Z=\psi (Y)$, where $\psi (Y)=\{\psi (y)~|~y\in Y\}$.
Then by the argument in the previous paragraph, $Z$ is a $H$-orbit in $X_{i-1}$,
and $E^*_i(\alpha){\psi}^{-1}(Z)$ is a $H$-orbit in $X_i(\alpha)$,
where ${\psi}^{-1}(Z)=\{x \in X_i~|~\psi (x) \in Z\}$ and
$E^*_i(\alpha){\psi}^{-1}(Z)=\{E^*_i(\alpha)x~|~ x \in {\psi}^{-1} (Z)\}= X_i(\alpha)\bigcap {\psi}^{-1} (Z)$.
We have $Y=E^*_i(\alpha){\psi}^{-1}(Z)$,
since $Y$ is contained in $E^*_i(\alpha){\psi}^{-1}(Z)$
and both $Y$ and $E^*_i(\alpha){\psi}^{-1}(Z)$  are a $H$-orbit.
Note that for $z\in Z$, $E^*_iAz$ is the sum of elements in the set
${\psi}^{-1}(z)=\{x\in X_i~|~\psi(x)=z\}$: $E^*_iAz=\underline{{\psi}^{-1}(z)}=\sum \limits_{x\in {\psi}^{-1}(z)}x$.
Thus we have
\begin{align}
&\underline{Y}=E^*_i(\alpha)A\underline{Z}
\end{align}
for a $H$-orbit $Y$ in $X_i(\alpha)$, where $Z=\psi(Y)$. Note that $Z=\psi(Y)$ is a $H$-orbit in $X_{i-1}$.
For $i=0$,~~$\underline{Y}=x_0$ is contained in $W_0=Tx_0$.
By induction on $i$,~$\underline{Y}$ is contained in $W_0$ for all
$i$ by (16), since $E^*_i(\alpha) \in T$ by Lemma 6. This completes the proof of Theorem 4.

\section{The principal $T$-module recognizes the tree: Proof of Theorem 5}
In this section, we prove Theorem 5, which states that the principal $T$-module recognizes the tree.

We firstly prove that ${\Gamma}^{\prime}$ is a tree if the principal $T^{\prime}$-module ${W_0}^{\prime}$ of ${\Gamma}^{\prime}$ is
isomorphic to the principal $T$-module $W_0$ of the tree $\Gamma$.
Since $\Gamma$ is a tree, we have
\begin{align}
&E^*_0A^iE^*_iA^iE^*_0x_0={\| E^*_iA^iE^*_0x_0\|}^2x_0,\\
&E^*_0A^iE^*_iAE^*_iA^iE^*_0x_0=0
\end{align}
for $0\leq i\leq D$, where $\|w\|$ is the norm of $w\in W_0.$
Since $W_0\simeq {W_0}^{\prime},$ we have $D=D^{\prime}$ and (17), (18) hold for ${x_0}^{\prime},~{E^*_0}^{\prime},~A^{\prime},~{E^*_i}^{\prime}$
in place of $x_0,~E^*_0,~A,~E^*_i$. This means that ${\Gamma}^{\prime}$ does not have any cycles,
i.e., ${\Gamma}^{\prime}$ is a tree.

Next we prove that ${\Gamma}^{(x_0)}$ and ${{\Gamma}^{\prime}}^{({x_0}^{\prime})}$ are isomorphic as rooted trees if $W_0\simeq {W_0}^{\prime}$
as $T$-modules.
Note that Lemma 6 holds for the principal $T$-module $W_0$ (and for the principal $T^{\prime}$-module ${W_0}^{\prime}$),
i.e., (10) and (11) hold for $E^*_i(\alpha)|_{W_0},~ E^*_{i+1}(\beta)|_{W_0},~ E^*_i|_{W_0},$ \\$A|_{W_0}$
in place of $E^*_i(\alpha),~ E^*_{i+1}(\beta), ~E^*_i,~ A$.
Note that $E^*_i(\alpha)|_{W_0}\neq 0$, $0\leq i\leq D$, ${\alpha}\in {\Lambda}_i$, since $X_i(\alpha)$ is invariant under the action of
$H$ and so $\underline{Y}\in W_0$ for $Y\in \mbox{Orb}(H,X_i(\alpha))$ by Theorem 4.
Since $T|_{W_0}\simeq T^{\prime}|_{{W_0}^{\prime}}$ as $T$-algebras, it follows from Lemma 6 by induction on $D-i$ that we can identify
${\Lambda}_i$ and ${{\Lambda}_i}^{\prime}$, $0\leq i\leq D=D^{\prime}$ and $E^*_i(\alpha)|_{W_0}$ corresponds to  ${E^*_i}^{\prime}(\alpha)|_{{W_0}^{\prime}}, ~0\leq i\leq D, ~\alpha \in{\Lambda}_i$ under the $T$-algebra isomorphism between $T|_{W_0}$ and $T^{\prime}|_{{W_0}^{\prime}}$.

For $x\in X_1(\alpha), ~\alpha \in {\Lambda}_1$, we consider the Terwilliger algebra of the rooted tree ${\Gamma}^{(x)}$.
Set $\hat{V}=\bigoplus \limits_{y\in X_1}\mathbb{C}{\Gamma}^{(y)}=\bigoplus \limits_{i=1}^DV^*_i$ and let $\hat{T}$ denote the subalgebra of
$\mbox{End}(\hat{V})$ generated by $E^*_i, 1\leq i\leq D$ and $\hat{A},$ where
\begin{align}
&\hat{A}=(\sum \limits_{j=1}^DE^*_j)A(\sum \limits_{i=1}^DE^*_i).
\end{align}
Then $\hat{T}$ is a Terwilliger algebra with $\hat{V}$ the standard module, in the sense of Section 1.1.
The Terwilliger algebra of ${\Gamma}^{(x)}$
for $x\in X_1(\alpha)$ is $\hat{T}|_{\mathbb{C}{\Gamma}^{(x)}}$,
the restriction of $\hat{T}$ to the standard module $\mathbb{C}{\Gamma}^{(x)}$ of
${\Gamma}^{(x)},$ and $\hat{T}x$ is the principal module of ${\Gamma}^{(x)}$.
For $x$, $x^{\prime}\in X_1(\alpha)$, we have $\hat{T}x\simeq \hat{T}x^{\prime}$
as $\hat{T}$-modules since ${\Gamma}^{(x)}$ and ${\Gamma}^{(x^{\prime})}$ are isomorphic as rooted trees.
Set $\underline{X_1(\alpha)}=\sum \limits_{x^{\prime}\in X_1(\alpha)}x^{\prime}$. Then for any $x\in X_1(\alpha),$ we have
\begin{align}
&\hat{T}\underline{X_1(\alpha)}\simeq \hat{T}x
\end{align}
as $\hat{T}$-modules with the natural correspondence of $a \underline{X_1(\alpha)}$ to $ax$ for $a\in \hat{T}$.
Note that $\underline{X_1(\alpha)}=E^*_1(\alpha)Ax_0$ and so $\hat{T}\underline{X_1(\alpha)}$ is contained in $W_0=Tx_0$ by Lemma 6.
Therefore by (20), the principal module $\hat{T}x$ of ${\Gamma}^{(x)}$ for $x\in X_1(\alpha)$ is determined up to isomorphism by the principal
module $W_0=Tx_0$ of ${\Gamma}^{(x_0)}$, as $\hat{T}$ is a sualgebra of $T$.

Recall it follows from $T|_{W_0}\simeq T^{\prime}|_{{W_0}^{\prime}}$ that ${\Lambda}_i$ and ${{\Lambda}_i}^{\prime}$ are identified and $E^*_i(\alpha)|_{W_0}$ corresponds to~${E^*_i}^{\prime}(\alpha)|_{{W_0}^{\prime}}$ $(\alpha \in {\Lambda}_i)$ under the $T$-algebra isomorphism between $T|_{W_0}$ and $T^{\prime}|_{{W_0}^{\prime}}$.
Hence by (20), the principal module of ${\Gamma}^{(x)}$ is isomorphic to that of ${{\Gamma}^{\prime}}^{(x^{\prime})}$
for $x\in X_1(\alpha), ~x^{\prime}\in {X_1}^{\prime}(\alpha),~\alpha \in {\Lambda}_1$, i.e., $\hat{T}x\simeq {\hat{T}}^{\prime}x^{\prime}$,
because $\hat{T}\underline{X_1(\alpha)}\simeq {\hat{T}}^{\prime}\underline{X_1^{\prime}(\alpha)}$.
By induction on $D$, ${\Gamma}^{(x)}$ is isomorphic to ${{\Gamma}^{\prime}}^{(x^{\prime})}$ as rooted trees
for $x\in X_1(\alpha),~ x^{\prime}\in {X_1}^{\prime}(\alpha),~ \alpha \in {\Lambda}_1$, i.e., ${\Gamma}^{(x)}\simeq {{\Gamma}^{\prime}}^{(x^{\prime})}$,  since the $D$ of ${\Gamma}^{(x)}$ is smaller than that of ${\Gamma}^{(x_0)}$.

The isomorphism class of the rooted tree ${\Gamma}^{(x_0)}$ is determined by $\{{\Gamma}^{(x)} ~|~ x\in X_1\}$ regarded as a multi-set of isomorphism
classes of rooted trees. Let $\{{\Gamma}^{(\alpha)}~ |~ \alpha\in {\Lambda}_1\}$ be a complete set of representatives for the isomorphism classes of the rooted trees ${\Gamma}^{(x)}$, $x\in X_1$.
Then $\{{\Gamma}^{(x)} ~|~ x\in X_1\}=\{|X_1(\alpha)|{\Gamma}^{(\alpha)} ~|~ \alpha \in {\Lambda}_1\}$ as multi-sets
and the isomorphism class of the rooted tree ${\Gamma}^{(x_0)}$ is determined by $\{|X_1(\alpha)|~|~ \alpha \in {\Lambda}_1\}$
and $\{{\Gamma}^{(\alpha)} ~|~ \alpha\in {\Lambda}_1\}$.

We have already shown that $W_0\simeq {W_0}^{\prime}$ implies ${\Lambda}_1= {{\Lambda}_1}^{\prime}$ and ${\Gamma}^{(\alpha)}={{\Gamma}^{\prime}}^{(\alpha)},~ \alpha \in {\Lambda}_1$.
Since $\underline{X_1(\alpha)}=E^*_1(\alpha)Ax_0$ and $|X_1(\alpha)|$ coincides with the norm of $\underline{X_1(\alpha)}$,
$W_0\simeq {W_0}^{\prime}$ implies $|X_1(\alpha)|=|{X_1}^{\prime}(\alpha)|$, $\alpha \in {\Lambda}_1$.
This completes the proof of Theorem 5.

%%%%%%%%%%%%%%%%%%%%%%%%%%%%%%%%%%%%%%%%%%%%%%%%%%%%%%%%%%

\end{document}